
\documentclass[12pt]{article}

\usepackage{amssymb}
\usepackage{amsthm}
\usepackage{xpatch}
\usepackage{amsmath}
\usepackage[utf8]{inputenc}
\usepackage{tikz,scalerel,tikz-cd}

\usepackage[english]{babel}
\usepackage{stmaryrd}
\xpatchcmd{\proof}{\itshape}{\prooflabelfont}{}{}
\newcommand{\prooflabelfont}{\bfseries}
\usepackage[colorlinks,citecolor=blue,urlcolor=blue,linkcolor=blue]{hyperref}
\newtheorem{thm}{Theorem}[section]

\newtheorem{lemma}[thm]{Lemma}
\newtheorem{cor}[thm]{Corollary}
\newtheorem{prop}[thm]{Proposition}
\newtheorem{claim}{Claim}[thm]
\newtheorem{problem}[thm]{Problem}

\newtheorem{question}{Question}[section]
\usepackage{quiver} 
\newtheorem{defn}[thm]{Definition}

\usepackage[alf]{abntex2cite}
\newtheorem{remark}[thm]{Remark}
\usepackage{geometry}
\usepackage{authblk}
 \usepackage{memhfixc}
 \usepackage{setspace}
 \usepackage{url}
\usepackage{enumitem}

\newcommand{\po}[2]{\langle #1 , #2 \rangle}

\usepackage{lineno}

\usepackage{tcolorbox}

\usepackage{authblk}

		
		
		\title{On totally Lindel\"of spaces}
		
		\newcommand{\Addresses}{{
  \bigskip
  \footnotesize

  G.~Fernandes, \textsc{Departamento de Matemática - Instituto de Matemática e Estatística - USP, Rua do Matão, 1010, São Paulo, 05508-090, SP, Brazil}\par\nopagebreak
  \textit{E-mail address}: \texttt{gzfernandes@pm.me} \par\nopagebreak 
  \textit{URL}: \href{http://www.gabrielzf.com}{http://www.gabrielzf.com}

  \medskip

  G.~Pinto, \textsc{Departamento de Matemática - Instituto de Ciências Matemáticas e Computação - USP, Avenida Trabalhador São-carlense, 400, São Carlos, 13566-590, SP, Brazil }\par\nopagebreak
  \textit{E-mail address}: \texttt{guipullus.gp@usp.br}

   \medskip

  V.~Rocha, \textsc{Departamento de Matemática - Instituto de Matemática e Estatística - USP, Rua do Matão, 1010, São Paulo, 05508-090, SP, Brazil}\par\nopagebreak
  \textit{E-mail address}: \texttt{vorocha@ime.usp.br}

}}
		\author{Gabriel Fernandes, Guilherme Pinto\footnote{The author is funded by CAPES-
88887.829890/2023-00} ~and Vinicius Rocha \footnote{The author is funded by CAPES-
88887.829890/2023-00}}

	\begin{document}	
		\maketitle

		\begin{abstract}
			The results in this paper answer three questions asked by \cite{noble} and give a partial answer to a question asked by  \cite{Alster}. We prove that every Alster space is totally Lindelöf and this gives a new characterization of regular Alster spaces. We construct a non-regular totally  Lindelöf space that is not Alster and we prove that there exists a Lindelöf $P$-space that is not Frolík.
		\end{abstract}

			
			

	

	\section{Introduction}

\textit{Totally Lindelöf} is a property of topological spaces that was defined in \cite{vaughan} by J. Vaughan  in order to unify the proof that countable products of Lindelöf P-spaces (see Definition \ref{Pspace}) and countable products of $\sigma$-compact spaces are Lindelöf (see \cite{vaughan} and Definition \ref{TotLindDef}). Alster defined in \cite{Alster} a property, nowadays known as  the Alster property, in an attempt to characterize the productively Lindelöf spaces (see  Definition \ref{AlsterDef}).

Since 2009, a series of results have been obtained on Alster spaces and related spaces and their relationship to the class of productively Lindelöf spaces. See \cite{noble}, \cite{tall1}, \cite{tall2}, \cite{tall3}, \cite{tall4}, \cite{tall5}, \cite{tall6}, \cite{zdomsky}, and \cite{aurichi}. Our main result in this paper is the following new characterization of Alster spaces:

\begin{thm} \label{AlstervsTotthm}
	If $X$ is a regular topological space, then $X$ is an Alster space if and only if $X$ is totally Lindelöf.
\end{thm}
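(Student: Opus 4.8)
The plan is to prove the two implications separately, exploiting throughout the dual description of an Alster cover. A $G_\delta$-cover $\mathcal{U}$ (in the sense of Definition~\ref{AlsterDef}) fails to admit a countable subcover precisely when the family of $F_\sigma$-complements $\{X\setminus U : U\in\mathcal{U}\}$ has empty intersection, no countable subfamily of which has empty intersection, while the compact-capturing clause of the definition says that every compact set is disjoint from some single member. I would match this, via complementation and countable intersections, against the filter-base data appearing in Definition~\ref{TotLindDef}. Consistent with the abstract, I expect ``Alster $\Rightarrow$ totally Lindel\"of'' to hold with no separation hypothesis, while the converse will require regularity in an essential way.

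For ``Alster $\Rightarrow$ totally Lindel\"of'' I would argue by contraposition. Assuming $X$ is not totally Lindel\"of, I fix a witnessing filter base $\mathcal{B}$ and dualize it into a family of $G_\delta$ sets: since the obstruction in Definition~\ref{TotLindDef} is controlled by countable subfamilies, the relevant cover members arise as countable intersections of open sets drawn from (or complementary to) $\mathcal{B}$, and are therefore $G_\delta$. The two things to verify are that this family genuinely covers $X$ (using that $\mathcal{B}$ fails the totally-Lindel\"of condition) and that it captures each compact set inside a single member (here the filter-base/adherence structure of $\mathcal{B}$ is used: compactness converts the countable obstruction into a single-member containment). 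A countable subcover would then refine back to a countable subfamily of $\mathcal{B}$ contradicting the choice of $\mathcal{B}$; hence no countable subcover exists and $X$ is not Alster. No regularity is needed here.

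For the converse ``regular $+$ totally Lindel\"of $\Rightarrow$ Alster'' I would start from a $G_\delta$-cover $\mathcal{U}$ and write each member as $U=\bigcap_{n}U_n$ with the $U_n$ open and decreasing. Regularity enters decisively at this stage: given a compact $K$ contained in a member $U$, I use regularity to shrink each surrounding open set to one whose closure still lies inside the previous, producing open approximations with $U_n\supseteq\overline{U_{n+1}}$ that are suitable for feeding into the filter base of Definition~\ref{TotLindDef} without destroying the compact-capturing property. Applying total Lindel\"ofness to the filter base built from the complements then yields a countable subfamily achieving the required adherence/covering condition, which I translate back to a countable subcover of $\mathcal{U}$.

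The main obstacle is this converse, and specifically the alignment of the compact-capturing clause of the Alster cover with the filter-base hypothesis of total Lindel\"ofness: the $G_\delta$ members must be approximated by honest open (or closed) sets without losing the property that compacta are captured in a single piece, and this is exactly what regularity supplies through the shrinking $U_n\supseteq\overline{U_{n+1}}$. That regularity cannot be dropped is confirmed by the totally Lindel\"of, non-Alster (hence necessarily non-regular) example constructed later in the paper, so any correct argument for this direction must use the separation axiom essentially; I expect the bookkeeping of these decreasing closed approximations, carried out uniformly over the compact sets, to be the most delicate part of the proof.
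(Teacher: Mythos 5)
Your overall architecture for the direction the paper actually proves (Alster implies totally Lindel\"of, with no separation axiom) is the contrapositive of the paper's argument, but the step you describe as ``compactness converts the countable obstruction into a single-member containment'' is exactly where the real content lies, and your sketch does not supply it. The cover members are not ``countable intersections of open sets drawn from (or complementary to) $\mathcal{B}$'': in the correct construction one chooses, for each compact $K\subseteq X$, a $G_\delta$ set $G_K\supseteq K$ that is disjoint from some $F_K\in\mathcal{B}$, and the existence of such a $G_K$ is precisely the assertion that $\mathcal{B}\vee G_\delta(K)=\{F_0\cap F_1 \mid F_0,F_1\in\mathcal{B}\cup G_\delta(K)\}$ fails to be a filter base. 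To derive this from ``$\mathcal{B}$ has no total stable extension'' you need the lemma (Lemma \ref{Vaughan51}) that a filter base converging to a compact set is total: if $\mathcal{B}\vee G_\delta(K)$ were a filter base, it would be a stable extension of $\mathcal{B}$ converging to $K$, hence total, a contradiction. Once the $G_K$ are in hand, your endgame is right: $\{G_K \mid K \mbox{ compact}\}$ is an Alster cover, and a countable subcover $\{G_{K_n} \mid n\in\omega\}$ would force $\bigcap_{n}F_{K_n}=\emptyset$, contradicting stability under countable intersections. Without the convergence-to-compact lemma, the compact-capturing clause of your dual cover is unjustified, and that clause is the whole point of the construction.

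For the converse (which the paper does not reprove but cites from Noble), your proposed use of regularity is aimed at the wrong target. Shrinking the representation $U=\bigcap_n U_n$ so that $U_n\supseteq\overline{U_{n+1}}$ does not by itself connect the Alster cover to the filter base of complements. What is actually needed is: given an Alster cover $\mathcal{U}$ with no countable subcover, the family $\{X\setminus\bigcup\mathcal{V} \mid \mathcal{V}\in[\mathcal{U}]^{\leq\omega}\}$ is an $\omega$-closed filter base; total Lindel\"ofness gives a total stable extension $\mathcal{H}$; regularity enters only through the companion lemma (Lemma \ref{Vaughan51comp}) that a total filter base on a regular space converges to a nonempty compact set $K$; the Alster clause then places $K$ inside a single member $U=\bigcap_n U_n$, convergence plus stability places some $H\in\mathcal{H}$ inside $U$, and this contradicts $X\setminus U$ belonging to the filter base. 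Your sketch never extracts the compact set from the total filter base, which is the one place the separation axiom is used; the decreasing-closure bookkeeping you anticipate as the delicate part is not the mechanism that makes this direction work.
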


The fact that regular totally Lindelöf spaces are Alster spaces is due to Noble in \cite[Proposition 5.1]{noble}. We will prove the other direction in Theorem \ref{AlsterImpTotthm}. Regarding totally Lindelöf spaces, Noble asked the following questions in \cite[Section 5]{noble}:
\begin{problem} \label{problem1}
\begin{itemize}
	\item[(a)] Is there an example of an Alster space that is not totally Lindelöf?
	\item[(b)] Is there an example of a totally Lindelöf (not regular) space that is not Alster?
	\item[(c)] What is the relationship between Alster (totally Lindelöf) spaces and Frolík spaces?
\end{itemize}
\end{problem}


In Section \ref{notation} below, we establish our notation and main definitions. In Section \ref{questiona}, we prove properties of filter bases, in particular, of filter bases generated by $G_{\delta}$ sets of a given compact $K$, and Theorem \ref{AlsterImpTotthm}, which answers Problem \ref{problem1}(a) negatively.

In Section \ref{questionb}, we show that $(\mathbb{R},\rho)$, a refinement of the usual topology of the reals, provides a positive answer to Problem \ref{problem1}(b), moreover, we show that such space $(\mathbb{R},\rho)$ is Hausdorff and Menger. 

In Section \ref{questionc}, we introduce the $\omega_{1}$-discrete set property (Definition \ref{dsp}), and show that Frolík spaces do not satisfy this property. Together with the results of the previous sections we will prove that the Lindelöfication of the discrete topology on $\omega_{1}$ gives a space that is Alster and is not Frolík, which together with the fact that $\omega^{\omega}$ is Frólik but it is not totally Lindelöf, answers Problem \ref{problem1}(c).





	\section{Notation and Preliminaries \label{notation}}

We denote by $L(\omega_1)$ the topological space $(\omega_1+1,\tau)$ generated by the singletons $\{\alpha\}$, for  $\alpha < \omega_{1}$, and the intervals $]\alpha,\omega_{1}+1] = \{\beta \in \omega_{1} \mid \alpha < \beta \leq \omega_{1}+1\}$, for $\alpha < \omega_{1}$.

\begin{defn}
	If $X$ is a set and $\mathcal{F} \subseteq \mathcal{P}(X)$:
	\begin{enumerate}
		\item We say that $\mathcal{F}$ is a filter base on $X$ if for all $F_0,F_1 \in \mathcal{F}$, we have $F_{0} \cap F_{1} \in \mathcal{F}$ and $\emptyset \notin \mathcal{F}$.
		\item If $\mathcal{H}$ and $\mathcal{F}$ are filter bases on $X$, we say that $\mathcal{H}$ extends $\mathcal{F}$ if $\mathcal{F} \subseteq \mathcal{H}$.
	\end{enumerate}
\end{defn}

The definitions of totally Lindelöf presented in \cite{noble} and \cite{vaughan} seem to be different at first sight. In \cite{noble}, the definition of totally Lindelöf uses the concept of $\delta$-stable, while in \cite{vaughan}, it uses the concept of stable under countable intersections. It is not hard to see that both definitions are equivalent, but for  sake of self-containment we shall introduce both notions and show in Proposition \ref{stable} that they are equivalent definitions of totally Lindelöf.

\begin{defn}
	If $X$ is a set and $\mathcal{F}$ is a filter base on $X$:
	\begin{itemize}
		\item[(a)] We say that $\mathcal{F}$ is stable under countable intersections if for all $S \in [\mathcal{F}]^{\leq \omega}$, there is $H \in \mathcal{F}$ such that $H \subseteq \bigcap S$.
		\item[(b)] We say that $\mathcal{F}$ is $\omega$-closed if for all $S \in [\mathcal{F}]^{\leq \omega}$, we have $\bigcap S \in \mathcal{F}$.
		\item[(c)] We say that $\mathcal{F}$ is $\delta$-stable if for all $S \in [\mathcal{F}]^{\leq \omega}$, we have $\bigcap S \neq \emptyset$.
	\end{itemize}
\end{defn}

\begin{defn}
	If $\mathcal{F} \subseteq \mathcal{P}(X)$ and $(X,\tau)$ is a topological space, we define $ad^{(X,\tau)}(\mathcal{F})= \bigcap \{\bar{F} \mid F \in \mathcal{F}\}$.
\end{defn}

\begin{remark}
	When there is no risk of ambiguity, we shall write $ad(\mathcal{F})$ instead of $ad^{(X,\tau)}(\mathcal{F})$. Occasionally, we may also write $ad^{\tau}(\mathcal{F})$ or $ad^{X}(\mathcal{F})$ instead of $ad^{(X,\tau)}(\mathcal{F})$.
\end{remark}

\begin{defn}
	If $\mathcal{F}$ is a filter base on a topological space $X$, we say that $\mathcal{F}$ is total if and only if for every filter base $\mathcal{H} \supseteq \mathcal{F}$, we have $ad(\mathcal{H})\neq \emptyset$.
\end{defn}

	Notice that if $\mathcal{F}$ is a filter base on a topological space $X$, the definition of stable under countable intersections filter base does not depend on the topology we consider on $X$. However, whether a filter base $\mathcal{F}$ on $X$ is total or not depends on the topology that we are considering, since being total depends on whether $ad^{X}(\mathcal{G}) \neq \emptyset$ for all $\mathcal{G}$ that extend $\mathcal{F}$.

\begin{defn}(Totally Lindelöf) \label{TotLindDef}
	We say that a topological space $X$ is totally Lindelöf if and only if for every filter base $\mathcal{F}$ on $X$ that is stable under countable intersections, there is a filter base $\mathcal{H} \supseteq \mathcal{F}$ which is total on $X$ and is stable under countable intersections.
\end{defn}

\begin{defn}\label{GdeltaX}
	If $X$ is a topological space, we denote by $G_{\delta}^{X}$ the collection of all countable intersections of open subsets of $X$. Given $S \subseteq X$, we define the following filter:
	$$G_{\delta}(S)= \{G \subseteq X \mid S \subseteq G \textrm{ and } G \in G_{\delta}^{X}\}$$
\end{defn}

As mentioned above, totally Lindelöf is a property that was isolated to unify the proof that countable products of $P$-spaces and countable products of $\sigma$-compact spaces remain Lindelöf.  The proof of the following proposition, which will be used later, may help the reader in gaining some intuition about the definition of totally Lindelöf. For readers interested in learning more about this definition, they should refer to \cite{vaughan}.

\begin{prop} \label{Lw1} $L(\omega_1)$ is totally Lindelöf.
\end{prop}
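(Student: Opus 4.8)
The plan is to realize adherence directly, exploiting that $L(\omega_1)$ has essentially two kinds of points. Recall that $ad(\mathcal{H})=\bigcap_{F\in\mathcal{H}}\overline{F}$ and that $\mathcal{H}$ is total iff every filter base extending it has nonempty adherence. I would first record two facts about $L(\omega_1)$. First, every $\gamma<\omega_1$ is isolated, so any filter base $\mathcal{H}$ containing the singleton $\{\gamma\}$ is automatically total: if $\{\gamma\}\in\mathcal{H}'$ then $\{\gamma\}\cap F\in\mathcal{H}'$ is nonempty for every $F\in\mathcal{H}'$, whence $\gamma\in F\subseteq\overline{F}$ and $\gamma\in ad(\mathcal{H}')$. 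Second, with $B_{\omega_1}=\{\,]\alpha,\omega_1+1]\mid\alpha<\omega_1\}$ the local base at the top point, any filter base $\mathcal{H}'\supseteq B_{\omega_1}$ has $\omega_1\in ad(\mathcal{H}')$, because each basic neighborhood $]\alpha,\omega_1+1]$ of $\omega_1$ meets every $F\in\mathcal{H}'$ (their intersection lies in $\mathcal{H}'$, hence is nonempty); thus every filter base extending $B_{\omega_1}$ is total. Note too that $B_{\omega_1}$ is closed under countable intersections, since $\bigcap_n\,]\alpha_n,\omega_1+1]=\,]\sup_n\alpha_n,\omega_1+1]$ and $\sup_n\alpha_n<\omega_1$ by regularity of $\omega_1$.

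Now fix a filter base $\mathcal{F}$ on $L(\omega_1)$ that is stable under countable intersections, and split into two cases according to whether the tails see $\mathcal{F}$. In Case 1, suppose $F\cap\,]\alpha,\omega_1+1]\neq\emptyset$ for every $F\in\mathcal{F}$ and every $\alpha<\omega_1$. Since $\mathcal{F}$ and $B_{\omega_1}$ are each closed under finite intersections, the finite intersections of members of $\mathcal{F}\cup B_{\omega_1}$ all have the form $F\cap\,]\alpha,\omega_1+1]$, and these are nonempty by hypothesis; so they generate a filter base $\mathcal{H}\supseteq\mathcal{F}$ containing $B_{\omega_1}$, which is total by the second fact. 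To see $\mathcal{H}$ is stable under countable intersections, given a countable $S\subseteq\mathcal{H}$ I write its members as $F_n\cap\,]\alpha_n,\omega_1+1]$, use stability of $\mathcal{F}$ to find $H_0\in\mathcal{F}$ with $H_0\subseteq\bigcap_n F_n$, and set $\alpha=\sup_n\alpha_n<\omega_1$; then $H_0\cap\,]\alpha,\omega_1+1]\in\mathcal{H}$ lies below $\bigcap S$.

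In Case 2, some $F_0\in\mathcal{F}$ and some $\alpha_0<\omega_1$ satisfy $F_0\cap\,]\alpha_0,\omega_1+1]=\emptyset$, i.e. $F_0\subseteq[0,\alpha_0]$ is a countable set of isolated points. Tracing $\mathcal{F}$ onto $F_0$ yields a filter base $\mathcal{G}=\{F\cap F_0\mid F\in\mathcal{F}\}$ on the countable set $F_0$ that inherits stability under countable intersections. The crux is to show $\bigcap\mathcal{G}\neq\emptyset$: enumerating $F_0=\{\gamma_n\mid n<\omega\}$, for each $n$ such that some member of $\mathcal{G}$ omits $\gamma_n$ I choose one witness $G_n$, and by countable stability find $H\in\mathcal{G}$ contained in the intersection of these $G_n$; since $H\neq\emptyset$, $H$ must contain some $\gamma_n$ that is omitted by no member of $\mathcal{G}$, giving $\gamma_n\in\bigcap\mathcal{G}$. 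Fixing such $\gamma:=\gamma_n$ we get $\gamma\in F\cap F_0\subseteq F$ for all $F\in\mathcal{F}$, so $\mathcal{H}:=\mathcal{F}\cup\{\{\gamma\}\}$ is a filter base extending $\mathcal{F}$; it is total by the first fact and is trivially stable under countable intersections.

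I expect the main obstacle to be the common-point argument in Case 2, which is the one place where stability under countable intersections (rather than mere $\delta$-stability) is genuinely used, together with the matching verification in Case 1 that the enlarged family remains stable; both rely on regularity of $\omega_1$ ensuring that a countable supremum of countable ordinals stays below $\omega_1$. The remaining points — that the two cases are exhaustive, that the constructed families are filter bases in the sense defined above, and that singletons and tails force nonempty adherence — are routine.
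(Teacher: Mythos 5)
Your proof is correct, but it is organized differently from the paper's. The paper splits on whether $\bigcap\mathcal{F}$ is empty: if it is nonempty, it adjoins the singleton of a common point (as you do in your Case 2); if it is empty, it shows $\mathcal{F}$ is \emph{already} total, arguing from stability that every countable subset of $\omega_1+1$ is disjoint from some member of $\mathcal{F}$, so every member of every extension is unbounded in $\omega_1$ and hence has $\omega_1$ in its closure. You instead split on whether every member of $\mathcal{F}$ meets every tail $]\alpha,\omega_1+1]$. This costs you an extra argument in your Case 2 --- the common-point lemma showing that a stable-under-countable-intersections filter base traced onto a countable set has nonempty intersection --- which the paper gets for free from its case hypothesis; but it buys you an explicit total extension in your Case 1, namely adjoining the neighborhood filter of $\omega_1$, which is essentially $\mathcal{F}\vee G_{\delta}(\{\omega_1\})$ in the notation of Proposition \ref{FvG} and so connects this example to the general mechanism used later for Alster spaces. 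Your version also isolates cleanly the two places where full stability (rather than mere $\delta$-stability) and the regularity of $\omega_1$ are used. All the steps you flag as needing care (exhaustiveness of the cases, the filter-base and stability verifications for the enlarged families, and the nonemptiness of the traced filter base) go through as you describe, so both routes are sound.
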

\begin{proof} Let $\mathcal{F}$ be a stable under countable intersections filter base on $L(\omega_1)$. We first consider the easier case when $\bigcap \mathcal{F} \neq \emptyset$.

\textbf{Case 1}: $\bigcap \mathcal{F} \neq  \emptyset$.

In this case, we fix $p \in \bigcap \mathcal{F}$ and consider the stable under countable intersections filter base $\mathcal{H} = \mathcal{F} \cup \{\{p\}\}$.

It follows that if $\mathcal{G} \supseteq \mathcal{H}$ and $\mathcal{G}$ is a filter base, then $\{p\} \subseteq G$ for all $G \in \mathcal{G}$, hence $ad(\mathcal{G}) \supseteq \bigcap \mathcal{G} \supseteq \{p\} \neq \emptyset$. Thus, $\mathcal{H}$ is total.

\textbf{Case 2}: $\bigcap \mathcal{F} = \emptyset$

We will prove that in this case, $\mathcal{F}$ is total. Given a countable set $C \subseteq \omega_{1}+1$, we fix, for each $\alpha < \omega_{1}$, $F_{\alpha} \in \mathcal{F}$ such that $\alpha \notin F_{\alpha}$. Using the definition of stable under countable intersections, we can fix $F \in \mathcal{F}$ such that $ F \subseteq \bigcap_{\alpha \in C} F_{\alpha}$, therefore $C \cap F = \emptyset$.
Thus, if $\mathcal{G} \supseteq \mathcal{F}$ is a filter base, then for all $G \in \mathcal{G}$, we have $sup(G \cap \omega_{1})=\omega_{1}$. So, for all $G \in \mathcal{G}$, we have $\omega_{1} \in \overline{G}$. Hence, $ad(\mathcal{G}) \supseteq \{\omega_{1}\} \neq \emptyset$. This proves that $\mathcal{F}$ is total.
\end{proof}

We observe that $L(\omega_1)$ is a Lindelöf $P$-space, so one could avoid the computations in Proposition \ref{Lw1} by applying J. Vaughan's result \cite{vaughan}  that every Lindelöf $P$-space is totally Lindelöf.

Our next proposition addresses the equivalence of the definitions of totally Lindelöf:

\begin{prop} \label{stable} 
	Suppose $X$ is a topological space. Then the following are equivalent: 
	\begin{itemize}
		\item[(a)] $X$ is totally Lindelöf. 
		\item[(b)] Every $\omega$-closed filter base $\mathcal{F}$ on $X$ has an extension $\mathcal{H}$ that is a total $\omega$-closed filter base on $X$.
		\item[(c)] Every $\delta$-stable filter base $\mathcal{F}$ on $X$ has an extension $\mathcal{H}$ that is a total $\delta$-stable filter base on $X$. 
	\end{itemize}  
\end{prop}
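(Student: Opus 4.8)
The plan is to exploit the evident hierarchy among the three filter-base properties — every $\omega$-closed filter base is stable under countable intersections, and every stable-under-countable-intersections filter base is $\delta$-stable — together with a single closure operation that upgrades a $\delta$-stable filter base to an $\omega$-closed one while preserving totality. First I would record the operation
\[
\widehat{\mathcal{F}}=\Bigl\{\textstyle\bigcap S \;\Bigm|\; S\in[\mathcal{F}]^{\leq\omega}\Bigr\}.
\]
Since a countable union of countable sets is countable, $\widehat{\mathcal{F}}$ is closed under countable intersections; and $\emptyset\notin\widehat{\mathcal{F}}$ holds precisely when $\mathcal{F}$ is $\delta$-stable. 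Taking singletons $S=\{F\}$ shows $\mathcal{F}\subseteq\widehat{\mathcal{F}}$. Hence, whenever $\mathcal{F}$ is $\delta$-stable, $\widehat{\mathcal{F}}$ is an $\omega$-closed filter base extending $\mathcal{F}$.

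The second ingredient is monotonicity of totality: if $\mathcal{F}\subseteq\mathcal{F}'$ are filter bases and $\mathcal{F}$ is total, then $\mathcal{F}'$ is total, because every filter base extending $\mathcal{F}'$ also extends $\mathcal{F}$ and therefore has nonempty $ad$. With these two facts the equivalence falls out as the cycle (b)$\Rightarrow$(a)$\Rightarrow$(c)$\Rightarrow$(b). For (b)$\Rightarrow$(a): given a stable-under-countable-intersections $\mathcal{F}$, it is $\delta$-stable, so $\widehat{\mathcal{F}}$ is $\omega$-closed; applying (b) yields a total $\omega$-closed $\mathcal{H}\supseteq\widehat{\mathcal{F}}\supseteq\mathcal{F}$, which is in particular stable under countable intersections. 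For (a)$\Rightarrow$(c): given a $\delta$-stable $\mathcal{F}$, the filter base $\widehat{\mathcal{F}}$ is $\omega$-closed, hence stable under countable intersections, so (a) produces a total stable extension $\mathcal{H}\supseteq\widehat{\mathcal{F}}\supseteq\mathcal{F}$, which is a fortiori $\delta$-stable.

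The one implication that genuinely uses both ingredients is (c)$\Rightarrow$(b): given an $\omega$-closed $\mathcal{F}$, it is $\delta$-stable, so (c) gives a total $\delta$-stable $\mathcal{H}\supseteq\mathcal{F}$; passing to $\widehat{\mathcal{H}}$ produces an $\omega$-closed filter base containing $\mathcal{F}$, and it is total by monotonicity since $\mathcal{H}\subseteq\widehat{\mathcal{H}}$. I do not expect a serious obstacle here: the only points requiring care are verifying that $\widehat{\mathcal{F}}$ really lands in the $\omega$-closed class (closure under countable, not merely finite, intersections) and that its empty-set condition is exactly $\delta$-stability. Once the closure operation and the monotonicity of totality are established, each implication is a one-line substitution, and the proposition reduces to bookkeeping about which class each filter base occupies.
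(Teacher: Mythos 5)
Your proposal is correct and follows essentially the same route as the paper: the same closure operation $\widehat{\mathcal{F}}=\{\bigcap S\mid S\in[\mathcal{F}]^{\leq\omega}\}$ does all the work, and the monotonicity of totality you isolate is exactly what the paper uses implicitly when it asserts that the closure of a total filter base is total. Running the cycle as (b)$\Rightarrow$(a)$\Rightarrow$(c)$\Rightarrow$(b) rather than (a)$\Rightarrow$(b)$\Rightarrow$(c)$\Rightarrow$(a) is an immaterial difference.
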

\begin{proof} We start with (a) $\Rightarrow $ (b). Suppose that $\mathcal{F}$ is an $\omega$-closed filter base on $X$. Then $\mathcal{F}$ is a filter base on $X$ that is stable under countable intersections. By $(a)$, there is filter base $\mathcal{H}\supseteq \mathcal{F}$ on $X$ such that $\mathcal{H} $ is stable under countable intersections and total. Let $$\mathcal{G} = \{ \bigcap S \mid S \in [\mathcal{H}]^{\leq \omega}\}.$$ From the fact that $\mathcal{H}$ is a filter base that is stable under countable intersections, it follows that $\mathcal{G}$ is an $\omega$-closed filter base. Also $\mathcal{G}$ is total, since $\mathcal{H}$ is total.

(b) $\Rightarrow$ (c) Suppose $\mathcal{F}$ is a $\delta$-stable filter base on $X$. Let $$\mathcal{F}'=\{ \bigcap S \mid S \in [\mathcal{F}]^{\leq \omega}\}.$$ From the fact that $\mathcal{F}$ is $\delta$-stable, it follows that $\mathcal{F}'$ is an $\omega$-closed filter base. Let $\mathcal{H} \supseteq \mathcal{F}'$ be a total $\omega$-closed filter base on $X$. Then $\mathcal{H}$ is a total $\delta$-stable filter base that extends $\mathcal{F}$.

(c) $\Rightarrow$ (a) Suppose that $\mathcal{F}$ is a filter base that is stable under countable intersections. Then $\mathcal{F}$ is a $\delta$-stable filter base. By (c), there is $\mathcal{H} \supseteq \mathcal{F}$, which is a total $\delta$-stable filter base. Then $$\mathcal{G}=\{ \bigcap S \mid S \in [\mathcal{H}]^{\leq \omega}\}$$ is a total stable under countable intersections filter base that extends $\mathcal{F}$. 
\end{proof}

For the sake of self-containment, we introduce the definitions of the properties Alster and Frolík.

\begin{defn}(Alster) \label{AlsterDef} 
	If $X$ is a topological space, we say that $\mathcal{G}$ is an Alster covering of $X$ if $\mathcal{G} \subseteq G_{\delta}^{X}$ and for every compact $K \subseteq X$, there is $G \in \mathcal{G}$ such that $K \subseteq G$.
	
	We say that $X$ is an Alster space if every Alster cover has a countable subcover.  
\end{defn}


\begin{defn}(Frolík) \label{DefFrolík}
We say that $X$ is a Frolík space if\footnote{See \cite[Theorem 12]{FrolikEquiv} for a proof that Definition \ref{DefFrolík} is equivalent to the original definition, where the class of spaces with such property was called $E$. Only later they were referred to as Frolík spaces \cite{tall6}.} there is a sequence of $\sigma$-compact spaces $\langle S_{n} \mid n \in \omega \rangle$  and there are $\varphi$ and $F$ such that $\varphi: X \rightarrow F \subseteq \prod_{n \in \omega}S_{n}$ is a homeomorphism and $F$ is a closed subset of the product space.
\end{defn}

\section{A characterization of regular Alster spaces \label{questiona} }

In this section, we prove Theorem \ref{AlstervsTotthm}. We will begin by proving Lemma \ref{Vaughan51}, which was originally stated in \cite{vaughan} without a proof.

\begin{defn} 
Let $\mathcal{F}$ be a filter base on $X$ and $W \subseteq X$. We say that $\mathcal{F}$ converges to $W$ if for all open sets $U$ that contain $W$, there exists $F \in \mathcal{F}$ such that $F \subseteq U$.   
\end{defn} 

\begin{lemma}(\cite[Lemma 5.1]{vaughan}) \label{Vaughan51}
If $\mathcal{F}$ is a filter base that converges to a compact set, then $\mathcal{F}$ is total.
\end{lemma}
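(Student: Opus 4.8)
The plan is to show directly that every filter base $\mathcal{H}$ extending $\mathcal{F}$ satisfies $ad(\mathcal{H}) \neq \emptyset$. Let $K$ be the compact set to which $\mathcal{F}$ converges. The first observation is that convergence to $K$ is inherited by any extension: if $U$ is open with $K \subseteq U$, then by hypothesis some $F \in \mathcal{F} \subseteq \mathcal{H}$ satisfies $F \subseteq U$, so $\mathcal{H}$ converges to $K$ as well. Thus it suffices to prove that an arbitrary filter base $\mathcal{H}$ converging to a compact set $K$ has $ad(\mathcal{H}) \cap K \neq \emptyset$, where $ad(\mathcal{H}) = \bigcap_{H \in \mathcal{H}} \overline{H}$.

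I would argue by contradiction, assuming $ad(\mathcal{H}) \cap K = \emptyset$. Then for each $x \in K$ there is some $H_x \in \mathcal{H}$ with $x \notin \overline{H_x}$; unfolding the definition of closure, we may pick an open neighborhood $U_x \ni x$ disjoint from $H_x$. The family $\{U_x \mid x \in K\}$ is then an open cover of $K$, so by compactness there are $x_1, \ldots, x_n$ with $K \subseteq U_{x_1} \cup \cdots \cup U_{x_n} =: U$.

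Now convergence to $K$ yields some $H \in \mathcal{H}$ with $H \subseteq U$, and since $\mathcal{H}$ is a filter base (closed under finite intersection, with $\emptyset \notin \mathcal{H}$) the set $H' = H \cap H_{x_1} \cap \cdots \cap H_{x_n}$ lies in $\mathcal{H}$ and is therefore nonempty. Pick $y \in H'$. Then $y \in H \subseteq U$, so $y \in U_{x_i}$ for some $i$; but also $y \in H_{x_i}$, contradicting $U_{x_i} \cap H_{x_i} = \emptyset$. This contradiction shows $ad(\mathcal{H}) \cap K \neq \emptyset$, hence $ad(\mathcal{H}) \neq \emptyset$, and since $\mathcal{H}$ was an arbitrary extension of $\mathcal{F}$, the filter base $\mathcal{F}$ is total.

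The argument is the standard compactness--adherence interplay, so I do not expect a serious obstacle. The only points that require care are to remember that one must work with an arbitrary extension $\mathcal{H}$ rather than with $\mathcal{F}$ directly (this is where inheritance of convergence is used), and to invoke closure under finite intersection to combine the finitely many witnesses $H_{x_i}$ with the convergence witness $H$ into a single nonempty member of $\mathcal{H}$.
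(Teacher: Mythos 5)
Your argument is correct and is essentially the paper's own proof: assume the adherence misses $K$, use compactness to extract a finite subcover of neighborhoods each disjoint from some member of $\mathcal{H}$, intersect those members, and contradict convergence. The only cosmetic difference is that you contradict $ad(\mathcal{H})\cap K=\emptyset$ rather than $ad(\mathcal{H})=\emptyset$, which gives a marginally sharper conclusion but changes nothing in the structure.
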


\begin{proof} 
Let $\mathcal{F}$ be a filter base that converges to a compact set $K$ and let $\mathcal{H}$ be a filter base that extends $\mathcal{F}$. Towards a contradiction, suppose that $ad(\mathcal{H})=\emptyset$. For each $p \in K$, let $V_{p}$ be an open set such that for some $H_{p} \in \mathcal{H}$ we have $H_{p} \cap V_{p} = \emptyset$. Since $K$ is compact, there is a finite cover $\{V_{p_{0}}, \cdots,  V_{p_{n}} \} $ of $ K$. Hence, $H=H_{p_{0}} \cap \cdots \cap H_{p_{n}} \in \mathcal{H}$ and $$H \cap (V_{p_{0}}\cup \cdots \cup V_{p_{n}}) = \emptyset.$$ On the other hand, since $\mathcal{F}$ converges to $K$, there is $T \in \mathcal{H}$ such that $T \subseteq (V_{p_{0}}\cup \cdots \cup V_{p_{n}})$, which implies that $H \cap T = \emptyset$, contradicting the fact that $\mathcal{H}$ is a filter base. Hence, $ad(\mathcal{H})$ is non-empty. 
\end{proof} 

We introduce the following definition that will be very helpful when dealing with filter bases:

\begin{defn} \label{Vee} Let $\mathcal{F}$ and $\mathcal{H}$ be filter bases on a set $X$. We denote by $\mathcal{F} \vee \mathcal{H}$ the set $\{F_0 \cap F_1 \mid F_0,F_1 \in \mathcal{F} \cup \mathcal{H} \}$ of pairwise intersections of elements of $\mathcal{F}$ and $\mathcal{H}$. 
    
\end{defn}

Notice that in the Definition \ref{Vee} it is possible that $\mathcal{F} \vee \mathcal{H}$ is not a filter base.

\begin{prop} \label{FvG}
Suppose $\mathcal{F}$ is a stable under countable intersections filter base on a topological space $X$ and $K \subseteq X$ is a compact set. If $$\mathcal{F}\vee G_{\delta}(K)= \{F_0 \cap F_1 \mid F_0, F_1 \in \mathcal{F} \cup G_{\delta}(K) \} $$ is a filter base, then it is a total, stable under countable intersections, filter base.
\end{prop}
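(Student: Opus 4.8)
The plan is to verify the two asserted properties separately, and the pleasant point is that the totality half follows in essentially one line from Lemma \ref{Vaughan51}. By that lemma it suffices to show that $\mathcal{F}\vee G_{\delta}(K)$ converges to the compact set $K$. This is forced by the very construction, since $G_{\delta}(K)$ already contains every open neighbourhood of $K$: if $U$ is open with $K\subseteq U$, then $U\in G_{\delta}^{X}$ (an open set is a trivial countable intersection of open sets) and $K\subseteq U$, so $U\in G_{\delta}(K)\subseteq \mathcal{F}\vee G_{\delta}(K)$, and $U\subseteq U$ witnesses the convergence. As $K$ is compact, Lemma \ref{Vaughan51} then gives that $\mathcal{F}\vee G_{\delta}(K)$ is total. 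I regard this as the conceptually important step even though it is brief; the content is precisely the observation that $G_{\delta}(K)$ is rich enough to converge to $K$.

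For stability under countable intersections I would argue directly. Fix a countable subfamily $\{G_{n}\mid n<\omega\}\subseteq \mathcal{F}\vee G_{\delta}(K)$ and write each $G_{n}=F_{0}^{n}\cap F_{1}^{n}$ with $F_{0}^{n},F_{1}^{n}\in \mathcal{F}\cup G_{\delta}(K)$. I would split all the sets $F_{j}^{n}$ into those lying in $\mathcal{F}$ and those lying in $G_{\delta}(K)$ (a set belonging to both may be placed in either group), let $A$ be the intersection of the $\mathcal{F}$-members and $B$ the intersection of the $G_{\delta}(K)$-members, with the convention that an empty group yields $X$. This is legitimate because $X\in G_{\delta}(K)$ (as $K\subseteq X$ and $X$ is open) and $\mathcal{F}\neq\emptyset$. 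Two small facts finish the setup: since $\mathcal{F}$ is stable under countable intersections there is $F\in\mathcal{F}$ with $F\subseteq A$; and a countable intersection of members of $G_{\delta}(K)$ is again a countable intersection of open sets containing $K$, so $B\in G_{\delta}(K)$.

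Setting $H:=F\cap B$ I then have $F\in\mathcal{F}$ and $B\in G_{\delta}(K)$, hence $H\in \mathcal{F}\vee G_{\delta}(K)$, and
\[
H=F\cap B\subseteq A\cap B=\bigcap_{n<\omega}\bigl(F_{0}^{n}\cap F_{1}^{n}\bigr)=\bigcap_{n<\omega}G_{n},
\]
which is exactly the required witness for stability under countable intersections. (Nonemptiness of $H$ is free, since $\mathcal{F}\vee G_{\delta}(K)$ is assumed to be a filter base.) The only care needed anywhere is the bookkeeping in this splitting argument together with the two edge cases where one of the groups is empty, so I do not expect a genuine obstacle: the filter-base hypothesis handles nonemptiness, the closure of $G_{\delta}^{X}$ under countable intersections handles $B$, stability of $\mathcal{F}$ handles $F$, and Lemma \ref{Vaughan51} handles totality outright.
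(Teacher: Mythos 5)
Your proof is correct and follows essentially the same route as the paper: totality is obtained exactly as there, by observing that every open $U\supseteq K$ lies in $G_{\delta}(K)$, so the filter base converges to $K$ and Lemma \ref{Vaughan51} applies. The only difference is that the paper asserts stability under countable intersections in one sentence, whereas you supply the (correct) routine verification by splitting the factors into $\mathcal{F}$-members and $G_{\delta}(K)$-members.
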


\begin{proof} 
If $\mathcal{F}\vee G_{\delta}(K)$ is a filter base, then it is a stable under countable intersections filter base. Given $U$ an open set of $X$ that contains $K$, we have $U \in G_{\delta}(K)$. Therefore, $\mathcal{F}\vee G_{\delta}(K)$ converges to $K$. Hence, by Lemma \ref{Vaughan51}, it follows that $\mathcal{F}\vee G_{\delta}(K)$ is total.
\end{proof}

Next, we shall prove the main result of this section, which will answer Problem \ref{problem1}(a) negatively.

\begin{thm}\label{AlsterImpTotthm} 
If $X$ is an Alster space, then $X$ is totally Lindelöf. 
\end{thm}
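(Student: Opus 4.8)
The plan is to reduce the whole theorem to a single application of the Alster property, funneled through Proposition~\ref{FvG}. Let $\mathcal{F}$ be a stable under countable intersections filter base on $X$. Directly from the definition, $\mathcal{F}$ is then $\delta$-stable: every countable subfamily of $\mathcal{F}$ has nonempty intersection. By Proposition~\ref{FvG}, it suffices to produce a single compact set $K \subseteq X$ for which $\mathcal{F} \vee G_{\delta}(K)$ is a filter base; once such a $K$ is found, $\mathcal{F} \vee G_{\delta}(K)$ is automatically total and stable under countable intersections, and it extends $\mathcal{F}$ (each $F \in \mathcal{F}$ equals $F \cap X$ with $X \in G_{\delta}(K)$). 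So the entire argument comes down to finding one good compact set.

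First I would unwind what it means for $\mathcal{F} \vee G_{\delta}(K)$ to be a filter base. Since both $\mathcal{F}$ and $G_{\delta}(K)$ are already closed under finite intersections, closure of $\mathcal{F} \vee G_{\delta}(K)$ under finite intersections is automatic, so the only way it can fail to be a filter base is $\emptyset \in \mathcal{F} \vee G_{\delta}(K)$. For nonempty $K$ this happens precisely when there are $F \in \mathcal{F}$ and $G \in G_{\delta}(K)$ with $F \cap G = \emptyset$. Thus I am looking for a nonempty compact $K$ such that $F \cap G \neq \emptyset$ for every $F \in \mathcal{F}$ and every $G_{\delta}$-set $G \supseteq K$.

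The key step is to show such a $K$ exists, and here I would argue by contradiction using that $X$ is Alster. Assume no nonempty compact set works. Then for each nonempty compact $K \subseteq X$ I may fix $F_{K} \in \mathcal{F}$ and a $G_{\delta}$-set $G_{K} \supseteq K$ with $F_{K} \cap G_{K} = \emptyset$. The family $\mathcal{G} = \{G_{K} \mid K \subseteq X \text{ compact}, K \neq \emptyset\}$ is then an Alster covering: it consists of $G_{\delta}$-sets, and every compact set $K$ is contained in $G_{K} \in \mathcal{G}$ (the empty compact set being contained in every member, and $\mathcal{G} \neq \emptyset$ as singletons are compact). Applying the Alster property, I extract a countable subfamily $\{G_{K_{n}} \mid n \in \omega\}$ with $X = \bigcup_{n \in \omega} G_{K_{n}}$. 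Since $F_{K_{n}} \cap G_{K_{n}} = \emptyset$ for each $n$, no point of $X$ can lie in all of the $F_{K_{n}}$, whence $\bigcap_{n \in \omega} F_{K_{n}} = \emptyset$. But $\{F_{K_{n}} \mid n \in \omega\}$ is a countable subfamily of $\mathcal{F}$, contradicting $\delta$-stability. Hence the desired $K$ exists, and Proposition~\ref{FvG} finishes the proof.

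The main obstacle is exactly this contradiction step: recognizing that the failure of the filter-base condition across all compact sets packages itself into a bona fide Alster covering $\mathcal{G}$, and that the countable subcover it produces forces a countable intersection of members of $\mathcal{F}$ to be empty. Everything else — the reduction through Proposition~\ref{FvG} and the bookkeeping about when $\emptyset \in \mathcal{F} \vee G_{\delta}(K)$ — is routine once this idea is in place.
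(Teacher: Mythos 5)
Your proof is correct and follows essentially the same route as the paper: reduce to finding a compact $K$ with $\mathcal{F}\vee G_{\delta}(K)$ a filter base via Proposition~\ref{FvG}, then derive a contradiction by assembling the sets $G_{K}$ into an Alster covering and using a countable subcover to force $\bigcap_{n}F_{K_{n}}=\emptyset$. The only differences are cosmetic (explicit handling of the empty compact set, and phrasing the final contradiction via $\delta$-stability rather than extracting $F\subseteq\bigcap_{n}F_{K_{n}}$ with $F=\emptyset$).
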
 

\begin{proof} 
Let $\mathcal{F}$ be a stable under countable intersections filter base on $X$. By Proposition \ref{FvG}, in order to find a total stable under countable intersections filter base extending $\mathcal{F}$, it is enough to find $K \subseteq X$ compact such that $\mathcal{F} \vee G_{\delta}(K)$ is a filter base.
	
Suppose that for every $K \subseteq X$ compact, we have that $\mathcal{F} \vee G_{\delta}(K)$ is not a filter base.

Hence, for each $K \subseteq X$ compact, let $G_{K} \in G_{\delta}(K)$ and $F_{K} \in \mathcal{F}$ be such that $G_{K} \cap F_{K}=\emptyset$. We set $$\mathcal{A}=\{G_{K} \mid K \subseteq X \textrm{ and } K  \mbox{ is compact} \},$$ which is an Alster cover of $X$. 

Since $X$ is an Alster space, there is $\langle K_{n} \mid n \in \omega \rangle$, a countable sequence of compact sets of $X$, such that $ \{G_{K_{n}} \mid n \in \omega\} \subseteq \mathcal{A}$ is a countable subcover of $X$. We have that $G_{K_{n}} \cap F_{K_{n}} = \emptyset$ for every $n \in \omega$, thus 

$$ \displaystyle \left(\bigcap_{n \in \omega}F_{K_n}\right) \cap \left(\bigcup_{n \in \omega} G_{K_{n}}\right) = \emptyset$$

But $X = \bigcup_{n \in \omega} G_{K_{n}}$. Since $\mathcal{F}$ is stable under countable intersections, there is $F \in \mathcal{F}$ such that $F \subseteq \bigcap_{n \in \omega} F_{K_{n}}$, which is a contradiction. 

Thus, there exists some $K \subseteq X$ compact such that $\mathcal{H} = \mathcal{F}\vee G_{\delta}(K)$ is a filter base and from Proposition \ref{FvG}, it is a total stable under countable intersections filter base. Therefore, $X$ is totally Lindelöf. 
\end{proof}

 We conclude this section with two results on filter bases stable under countable intersections. While they are not needed later, these results help clarify how  such filter bases converge in a Lindelöf space.

In Proposition \ref{CharTotal}, below, we prove a  characterization of total stable under countable intersections filter bases that holds when we are restricted to regular totally Lindelöf spaces. 

Our proof of Proposition \ref{CharTotal} relies on the following lemma due to J. Vaughan.
	
	\begin{lemma} \label{Vaughan51comp} \cite[Lemma 5.1]{vaughan} Suppose $X$ is a regular space. If $\mathcal{F}$ is a total filter base on $X$, then there is a non-empty compact set $K\subseteq X$ such that $\mathcal{F}$ converges to $K$.  
\end{lemma}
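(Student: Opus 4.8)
The plan is to take the adherence of $\mathcal{F}$ itself as the required compact set, $K := ad(\mathcal{F}) = \bigcap_{F \in \mathcal{F}} \overline{F}$, and to establish in turn that $K \neq \emptyset$, that $\mathcal{F}$ converges to $K$, and that $K$ is compact. The first point is immediate: since $\mathcal{F}$ is total and $\mathcal{F}$ trivially extends itself, $ad(\mathcal{F}) = K \neq \emptyset$. So the content of the lemma is the convergence and, above all, the compactness.

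For convergence I would argue by contradiction, and I note that regularity is not needed for this step. Fix an open $U \supseteq K$ and suppose that no member of $\mathcal{F}$ is contained in $U$; then $F \cap (X \setminus U) \neq \emptyset$ for every $F \in \mathcal{F}$, so $\mathcal{H} := \mathcal{F} \cup \{F \cap (X \setminus U) \mid F \in \mathcal{F}\}$ is a filter base extending $\mathcal{F}$. By totality $ad(\mathcal{H}) \neq \emptyset$. On the other hand $\mathcal{F} \subseteq \mathcal{H}$ gives $ad(\mathcal{H}) \subseteq ad(\mathcal{F}) = K$, while for any fixed $F_0 \in \mathcal{F}$ we have $ad(\mathcal{H}) \subseteq \overline{F_0 \cap (X \setminus U)} \subseteq X \setminus U$, because $X \setminus U$ is closed. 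Hence $ad(\mathcal{H}) \subseteq K \cap (X \setminus U) = \emptyset$, a contradiction; therefore some $F \in \mathcal{F}$ satisfies $F \subseteq U$, i.e.\ $\mathcal{F}$ converges to $K$.

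The heart of the argument, and the only place regularity enters, is the compactness of $K$. I would assume $K$ is not compact and fix an open cover $\{U_i \mid i \in I\}$ of $K$ by open sets of $X$ with no finite subcover. Using regularity twice, for each $x \in K$ I would choose $U_{i(x)} \ni x$ together with nested open sets $x \in W_x \subseteq \overline{W_x} \subseteq V_x \subseteq \overline{V_x} \subseteq U_{i(x)}$. Then $\{W_x\}$ covers $K$, and $\{V_x\}$ still has no finite subcover, since a finite subcover by the $V_x$ would yield one by the $U_i$ through $\overline{V_x} \subseteq U_{i(x)}$. I would then form the filter base obtained from $\mathcal{F}$ by deleting finite unions of the closures $\overline{W_x}$:
\[
\mathcal{H} := \Bigl\{ F \cap \bigl(X \setminus (\overline{W_{x_1}} \cup \dots \cup \overline{W_{x_n}})\bigr) \;\Big|\; F \in \mathcal{F},\, n \in \omega,\, x_1, \dots, x_n \in K \Bigr\}.
\]

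The main obstacle is checking that $\mathcal{H}$ is genuinely a filter base, i.e.\ that all its members are non-empty, and this is exactly where the shrinking furnished by regularity is essential. Given finitely many $x_1, \dots, x_n \in K$, since $\{V_x\}$ has no finite subcover there is $y \in K \setminus (V_{x_1} \cup \dots \cup V_{x_n})$; as $\overline{W_{x_j}} \subseteq V_{x_j}$, the point $y$ avoids each $\overline{W_{x_j}}$, so $X \setminus (\overline{W_{x_1}} \cup \dots \cup \overline{W_{x_n}})$ is an open neighbourhood of $y$, and because $y \in K = ad(\mathcal{F}) \subseteq \overline{F}$ this neighbourhood meets $F$; thus the corresponding member of $\mathcal{H}$ is non-empty. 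The collection is plainly closed under finite intersections and contains $\mathcal{F}$ (take $n = 0$), so $\mathcal{H}$ extends $\mathcal{F}$. Totality then yields some $z \in ad(\mathcal{H})$; taking $n = 0$ forces $z \in K$, so $z \in W_{x_0}$ for some $x_0 \in K$, while for any $F \in \mathcal{F}$ the member $F \cap (X \setminus \overline{W_{x_0}})$ of $\mathcal{H}$ forces $z \in \overline{X \setminus \overline{W_{x_0}}} \subseteq X \setminus W_{x_0}$, a contradiction. This shows $K$ is compact and completes the proof.
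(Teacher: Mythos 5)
Your proof is correct. The paper itself gives no argument for this lemma---it is quoted from Vaughan's Lemma 5.1 without proof---so there is nothing internal to compare against; your choice $K=ad(\mathcal{F})$, the totality argument for convergence, and the use of regularity only to shrink a putative cover with no finite subcover into closures that a filter-base extension can avoid, is the standard and complete route (indeed one level of shrinking $W_x\subseteq\overline{W_x}\subseteq U_{i(x)}$ would already suffice, since a finite cover of $K$ by the $\overline{W_{x_j}}$ would give one by the $U_{i(x_j)}$).
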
 	

\begin{prop} \label{CharTotal}
Suppose $X$ is a regular totally Lindelöf space and $\mathcal{F}$ is a stable under countable intersections filter base on $X$. Then there is $K$, a compact set such that $$\mathcal{F}\vee G_{\delta}(K)= \{F_0 \cap F_1 \mid F_0, F_1 \in \mathcal{F} \cup G_{\delta}(K) \} $$ is a total stable under countable intersections filter base that extends $\mathcal{F}$. 
\end{prop}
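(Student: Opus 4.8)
The plan is to reduce the whole statement to producing a single nonempty compact set $K$ for which $\mathcal{F}\vee G_\delta(K)$ is a filter base. Once such a $K$ is in hand, Proposition \ref{FvG} immediately upgrades this to the conclusion that $\mathcal{F}\vee G_\delta(K)$ is a total stable under countable intersections filter base, and the requirement that it extend $\mathcal{F}$ is automatic, since every $F\in\mathcal{F}$ equals $F\cap F$ with $F\in\mathcal{F}\cup G_\delta(K)$. So the entire content lies in finding $K$ and checking the filter-base property.

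To locate $K$, I would first exploit the hypothesis that $X$ is totally Lindelöf: since $\mathcal{F}$ is stable under countable intersections, there is a total stable under countable intersections filter base $\mathcal{H}\supseteq\mathcal{F}$. Because $X$ is regular and $\mathcal{H}$ is total, Lemma \ref{Vaughan51comp} supplies a nonempty compact set $K\subseteq X$ to which $\mathcal{H}$ converges. This $K$ is my candidate, and note that $\mathcal{F}\subseteq\mathcal{H}$ will be used crucially below.

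It then remains to verify that $\mathcal{F}\vee G_\delta(K)$ is a filter base, i.e. that it is closed under finite intersections and omits $\emptyset$. Closure is routine: $\mathcal{F}$ is closed under finite intersections by definition, and $G_\delta(K)$ is too, since a finite intersection of $G_\delta$ sets containing $K$ is again a $G_\delta$ set containing $K$; hence any finite intersection of members of $\mathcal{F}\cup G_\delta(K)$ collapses to a set of the form $F\cap G$ with $F\in\mathcal{F}$ and $G\in G_\delta(K)$ (allowing $X\in G_\delta(K)$ for the degenerate groupings). The condition $\emptyset\notin\mathcal{F}\vee G_\delta(K)$ then splits into three cases: intersections of two members of $\mathcal{F}$ are nonempty because $\mathcal{F}$ is a filter base, and intersections of two members of $G_\delta(K)$ contain $K\neq\emptyset$.

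The one genuinely interesting case, which I expect to be the main obstacle, is showing that $F\cap G\neq\emptyset$ for every $F\in\mathcal{F}$ and $G\in G_\delta(K)$. I would argue by contradiction using convergence of $\mathcal{H}$: writing $G=\bigcap_n U_n$ with each $U_n$ open, the inclusions $K\subseteq G\subseteq U_n$ show that each $U_n$ is an open neighborhood of $K$, so convergence of $\mathcal{H}$ to $K$ yields $H_n\in\mathcal{H}$ with $H_n\subseteq U_n$; stability under countable intersections of $\mathcal{H}$ then gives $H\in\mathcal{H}$ with $H\subseteq\bigcap_n H_n\subseteq G$. Since $F\in\mathcal{F}\subseteq\mathcal{H}$, we have $F\cap H\in\mathcal{H}$, whence $F\cap H\neq\emptyset$; but $F\cap H\subseteq F\cap G$, so the assumption $F\cap G=\emptyset$ forces $F\cap H=\emptyset$, a contradiction. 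This establishes that $\mathcal{F}\vee G_\delta(K)$ is a filter base and, via Proposition \ref{FvG}, completes the proof.
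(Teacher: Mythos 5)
Your proposal is correct and follows essentially the same route as the paper: extract a total extension $\mathcal{H}\supseteq\mathcal{F}$ from total Lindelöfness, apply Lemma \ref{Vaughan51comp} to get a compact $K$ to which $\mathcal{H}$ converges, verify that $F\cap G\neq\emptyset$ for $F\in\mathcal{F}$ and $G\in G_{\delta}(K)$ via convergence plus countable stability of $\mathcal{H}$, and conclude with Proposition \ref{FvG}. The only difference is that you spell out the routine closure-under-finite-intersections check, which the paper leaves implicit.
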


\begin{proof} 
Let $\mathcal{H} \supseteq \mathcal{F}$ be a total stable under countable intersections filter base that extends $\mathcal{F}$. By Lemma \ref{Vaughan51comp}, there is $K \subseteq X$ compact such that $\mathcal{H}$ converges to $K$. We will verify that $\mathcal{F}\vee G_{\delta}(K)$ is a filter base and the proposition will follow from Proposition \ref{FvG}. 

Let $G$ be a $G_{\delta}$ set such that $G \supseteq K$ and let $F \in \mathcal{F}$. 

Write $G = \bigcap_{n \in \omega} V_{n}$ where each $V_n$ is an open set. Since $\mathcal{H}$ converges to $K$, it follows that for every $n \in \omega$, there is $H_n \in \mathcal{H}$ such that $H_n \subseteq V_{n}$. Using the fact that $\mathcal{H}$ is stable under countable intersections, there is $H \subseteq \bigcap_{n \in \omega} H_{n} \subseteq \bigcap_{n \in \omega} V_{n}= G$ such that $H \in \mathcal{H}$. From the fact that $\mathcal{H}$ extends $\mathcal{F}$, we have that $F \cap H \neq \emptyset$, thus $F \cap G \neq \emptyset$. This verifies that $\mathcal{F} \vee G_{\delta}(K)$ is a filter base. 
\end{proof} 

\begin{thm} \label{LindelofFilter}
Suppose $X$ is a Lindelöf space and $\mathcal{F}$ is a stable under countable intersections filter base on $X$. Then $\mathcal{F} $ converges to $ ad(\mathcal{F})$.	
\end{thm}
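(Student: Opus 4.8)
The plan is to verify the convergence condition directly from its definition. Fix an arbitrary open set $U$ with $ad(\mathcal{F}) \subseteq U$; the goal is to produce a single $F \in \mathcal{F}$ with $F \subseteq U$. The only tools I expect to need are the unwound definition $ad(\mathcal{F}) = \bigcap\{\overline{F} \mid F \in \mathcal{F}\}$, the Lindelöf property of $X$, and stability under countable intersections, combined in that order.

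First I would handle the complement. Each point $x \in X \setminus U$ lies outside $ad(\mathcal{F})$, so by definition of $ad(\mathcal{F})$ there is some $F_x \in \mathcal{F}$ with $x \notin \overline{F_x}$. Setting $V_x := X \setminus \overline{F_x}$ gives an open neighborhood of $x$ with the crucial separation property $V_x \cap F_x = \emptyset$ (in fact $V_x \cap \overline{F_x} = \emptyset$). The important point here, and the one subtle design choice, is to separate $x$ from the whole closure $\overline{F_x}$ rather than merely from $F_x$; this is exactly what the definition of $ad$ supplies, and it is what makes the last step go through.

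Next I would form the open cover $\{U\} \cup \{V_x \mid x \in X \setminus U\}$ of $X$: points of $U$ are covered by $U$, and every $x \notin U$ lies in its own $V_x$. Since $X$ is Lindelöf, I extract a countable subcover, which after adjoining $U$ if necessary has the form $\{U\} \cup \{V_{x_n} \mid n \in \omega\}$, so that $X = U \cup \bigcup_{n \in \omega} V_{x_n}$. Then I apply stability under countable intersections to the countable family $\{F_{x_n} \mid n \in \omega\} \subseteq \mathcal{F}$ to obtain $F \in \mathcal{F}$ with $F \subseteq \bigcap_{n \in \omega} F_{x_n}$.

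Finally I would check $F \subseteq U$. Any $p \in F$ satisfies $p \in F_{x_n}$ for every $n$; since $F_{x_n} \cap V_{x_n} = \emptyset$, the point $p$ lies in no $V_{x_n}$, hence $p \notin \bigcup_{n} V_{x_n}$, and as $X = U \cup \bigcup_{n} V_{x_n}$ this forces $p \in U$. Thus $F \subseteq U$, establishing convergence to $ad(\mathcal{F})$. I do not anticipate a genuine obstacle in this argument; the whole proof hinges on the single interplay between passing from an arbitrary cover to a \emph{countable} one (Lindelöf) and then being able to intersect those countably many filter-base members down to one set (stability under countable intersections), so the only care required is to keep the two countability steps aligned.
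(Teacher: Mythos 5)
Your proof is correct, but it takes a different route from the paper's. The paper argues by contradiction: assuming no $F\in\mathcal{F}$ is contained in $U$, it forms the auxiliary family $\mathcal{H}=\{F\setminus U \mid F\in\mathcal{F}\}$, checks that this is again a stable under countable intersections filter base, and then invokes the fact that on a Lindelöf space every such filter base has non-empty adherence, getting a point of $ad(\mathcal{H})\subseteq X\setminus U$ that also lies in $ad(\mathcal{F})\subseteq U$. You instead argue directly from the open-cover form of Lindelöfness: covering $X\setminus U$ by the open sets $V_x=X\setminus\overline{F_x}$, extracting a countable subcover, and intersecting the corresponding countably many $F_{x_n}$ down to a single $F\subseteq U$. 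The two arguments are essentially dual to one another -- the closed-set/filter characterization of Lindelöf that the paper quotes as a black box is proved exactly by the complementation you carry out inline -- so your version is more self-contained and constructive (it exhibits the witnessing $F$), while the paper's is shorter at the cost of relying on that characterization. Your one genuinely load-bearing design choice, separating $x$ from $\overline{F_x}$ rather than from $F_x$, is exactly right and is what makes $V_x\cap F_x=\emptyset$ available at the end; the degenerate case $X\setminus U=\emptyset$ is trivially absorbed. No gap.
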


\begin{proof} 
Suppose that there is an open set $U$ such that $ad(\mathcal{F}) \subseteq U$ and that there is no $F \in \mathcal{F}$ such that $F \subseteq U$. Let $\mathcal{H} = \{ F \setminus U \mid F \in \mathcal{F}\}$ and $\{F_n \mid n \in \omega\} \subseteq \mathcal{F}$. From the hypothesis that $\mathcal{F}$ is stable under countable intersections, we can fix $F \subseteq  \bigcap_{n \in \omega } F_{n}$. Notice that 

$$F \setminus U \subseteq \left(\bigcap_{n \in \omega} F_{n}\right) \setminus U = \bigcap_{n \in \omega} (F_{n } \setminus U)$$

This verifies that $\mathcal{H}$ is stable under countable intersections. Since $X$ is Lindelöf, it follows that $ad(\mathcal{H}) \neq \emptyset$ and $ad(\mathcal{H}) \subseteq (X \setminus U)$. On the other hand, $ad(\mathcal{H}) \subseteq ad(\mathcal{F}) \subseteq U$, a contradiction. 
\end{proof}

	\section{A totally Lindelöf space that is not an Alster space \label{questionb} }

In this section, we prove that the refinement of the usual topology of the real line obtained by taking complements of countable sets is a totally Lindelöf space that is not an Alster space. This will answer Problem \ref{problem1}(b).

\begin{thm} \label{CETHM}
There is a Hausdorff, totally Lindelöf space that is not an Alster space.
\end{thm}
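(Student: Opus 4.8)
The plan is to take $X$ to be the set $\mathbb{R}$ equipped with the topology $\tau$ generated by the usual open sets of the real line together with all cocountable sets, so that a base for $\tau$ is $\{U\setminus C : U\text{ Euclidean-open},\ C\in[\mathbb{R}]^{\leq\omega}\}$. That $X$ is Hausdorff is immediate, since $\tau$ refines the Hausdorff Euclidean topology. Everything rests on a structural lemma I would prove first: \emph{the compact subsets of $X$ are exactly the finite sets}. One inclusion is trivial. For the other, note the identity map $X\to(\mathbb{R},\text{usual})$ is continuous, so a compact $K\subseteq X$ is Euclidean-compact; if $K$ were infinite it would have a Euclidean accumulation point $p\in K$ and an injective sequence $x_n\in K\setminus\{p\}$ with $x_n\to p$. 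Then the cocountable open set $\mathbb{R}\setminus\{x_n:n\in\omega\}$, together with pairwise disjoint small Euclidean neighborhoods of the $x_n$, is an open cover of $K$ with no finite subcover, a contradiction.

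Granting the lemma, \emph{not Alster} is the easy half. Every finite set is Euclidean-closed, hence a countable intersection of Euclidean-open sets, hence a member of $G_{\delta}^{X}$; so the family $\mathcal{G}=[\mathbb{R}]^{<\omega}$ of all finite subsets is contained in $G_{\delta}^{X}$. Because every compact set is finite, each compact $K$ satisfies $K\in\mathcal{G}$ and $K\subseteq K$, so $\mathcal{G}$ is an Alster covering in the sense of Definition \ref{AlsterDef}. But any countable subfamily of $\mathcal{G}$ has countable union and hence cannot equal $\mathbb{R}$, so $\mathcal{G}$ admits no countable subcover and $X$ is not Alster.

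The substantial part — and the step I expect to be the main obstacle — is showing $X$ is \emph{totally Lindelöf}, because $X$ is not regular, so Lemma \ref{Vaughan51comp} is unavailable, and the compact sets are merely finite, so one cannot force an arbitrary filter base to converge to a compact set. The first ingredient I would establish is a description of closures: for $S\subseteq X$ one has $\overline{S}=S\cup\{q : q\text{ is a condensation point of }S\text{ in the Euclidean line}\}$, since a basic neighborhood $(q-\varepsilon,q+\varepsilon)\setminus C$ of $q$ meets $S$ for every countable $C$ precisely when $S$ is uncountable in every Euclidean neighborhood of $q$. Now let $\mathcal{F}$ be stable under countable intersections. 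If $\bigcap\mathcal{F}\neq\emptyset$, fix $p\in\bigcap\mathcal{F}$; then $p\in F\cap G$ for every $F\in\mathcal{F}$ and every $G\in G_{\delta}(\{p\})$, so $\mathcal{F}\vee G_{\delta}(\{p\})$ is a filter base and is total by Proposition \ref{FvG} applied to the finite, hence compact, set $K=\{p\}$.

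Finally I would treat the case $\bigcap\mathcal{F}=\emptyset$, where the real work lies, driven by two uses of stability. First, if some $F_0\in\mathcal{F}$ were countable, then choosing for each $a\in F_0$ a member $F_a\in\mathcal{F}$ with $a\notin F_a$ and intersecting these countably many sets would yield a member of $\mathcal{F}$ disjoint from $F_0$, contradicting the filter-base property; hence every member of $\mathcal{F}$ is uncountable. Second, writing $\mathbb{R}=\bigcup_n[-n,n]$, if for every $n$ some $F_n\in\mathcal{F}$ met $[-n,n]$ in a countable set, a stable intersection of the $F_n$ would be a countable member, just excluded; so there is $N$ with $F\cap[-N,N]$ uncountable for all $F\in\mathcal{F}$. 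I then set $\mathcal{H}=\mathcal{F}\cup\{F\cap([-N,N]\setminus C):F\in\mathcal{F},\ C\in[\mathbb{R}]^{\leq\omega}\}$, a stable-under-countable-intersections filter base extending $\mathcal{F}$ whose members are nonempty because $F\cap[-N,N]$ is uncountable. To see $\mathcal{H}$ is total, take any filter base $\mathcal{G}\supseteq\mathcal{H}$; the presence in $\mathcal{H}$ of the sets $F\cap([-N,N]\setminus C)$ forces $H'\cap[-N,N]$ to be uncountable for every $H'\in\mathcal{G}$, so its set of condensation points is a nonempty closed subset of the Euclidean-compact interval $[-N,N]$. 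These closed sets have the finite intersection property, since $H_1'\cap\cdots\cap H_k'\cap[-N,N]$ is again uncountable and any of its condensation points is common to all factors; by compactness of $[-N,N]$ they share a point $q$, and by the closure description $q\in\overline{H'}$ for every $H'$, i.e. $q\in ad(\mathcal{G})$. Thus $\mathcal{H}$ is a total stable-under-countable-intersections extension of $\mathcal{F}$, so $X$ is totally Lindelöf. The crux throughout is converting the failure of regularity into the condensation-point computation and then confining those condensation points to a genuine Euclidean-compact interval, where compactness — in place of the unavailable Lemma \ref{Vaughan51comp} — produces the nonempty adherence.
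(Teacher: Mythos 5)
Your proposal is correct, and for the substantial half --- total Lindel\"ofness of the cocountable refinement of $\mathbb{R}$ --- it takes a genuinely different route from the paper. The paper first extends $\mathcal{F}$ to a filter base $\mathcal{G}$ that is total in the \emph{Euclidean} topology (using that the usual real line is totally Lindel\"of), and then shows $\mathcal{G}$ remains total in $\rho$: for an arbitrary extension $\mathcal{H}$ it shrinks each $H$ to $H\setminus V_H$, where $V_H$ is the union of all Euclidean-open sets meeting $H$ countably, and proves $\overline{H\setminus V_H}^{\rho}=\overline{H\setminus V_H}^{\tau}$, so the $\rho$-adherence is inherited from the $\tau$-adherence. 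You instead work entirely inside $(\mathbb{R},\rho)$: you use stability twice to show every member of $\mathcal{F}$ is uncountable and to confine uncountability to a fixed interval $[-N,N]$, adjoin the sets $F\cap([-N,N]\setminus C)$, and extract a common condensation point via the finite intersection property in the Euclidean-compact interval. Both arguments pivot on the same observation (the $\rho$-closure of a set contains its Euclidean condensation points), but yours is self-contained where the paper's delegates to total Lindel\"ofness of the usual line, and yours produces the adherent point constructively inside a compact interval rather than transferring it. A further point in your favour: the paper's written proof of this theorem never actually carries out the ``not Alster'' verification, whereas you supply it in full (compact sets of $\rho$ are finite, so the finite sets form an Alster cover with no countable subcover); this is the standard argument and it is correct.
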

\begin{proof}
Let $\tau$ be the usual topology on the reals. We define $$\rho = \{ U \setminus E \mid U \in \tau \textrm{ and } E \in [\mathbb{R}]^{\leq \omega}\} $$ a refinement of $\tau$.

We will show that $\rho$ is a totally Lindelöf topology that is not Alster. Notice that  every element of $[\mathbb{R}]^{<\omega} $ is a $ G_{\delta}$ set in $\tau$ and there is no countable subset of $[\mathbb{R}]^{<\omega}$ that covers $\mathbb{R}$. In order to prove that $\rho$ is not Alster, we will prove that $[\mathbb{R}]^{<\omega}$ is an Alster cover. This follows from the following claim:

\begin{claim} \label{FiniteClaim}
    Every  compact subset of $(\mathbb{R},\rho)$  is finite.  
\end{claim}

\begin{proof}
    Let $\{p_{n} \mid n \in \omega \}\subseteq \mathbb{R} $ be an infinite set. For each $n \in \omega$, let $U_{n}=\mathbb{R} \setminus \{p_{k} \mid k \geq n\}$ and $\mathcal{O}=\{U_{n} \mid n \in \omega \}$. Then $\mathcal{O}$ is an open cover of $\{p_{n} \mid n \in \omega \}$ that has no finite sub-cover.
\end{proof}

Let $\mathcal{F}$ be a stable under countable intersections filter base on $\mathbb{R}$ such that $\bigcap{\mathcal{F}}=\emptyset$. Since $\mathbb{R}$ is totally Lindelöf, there is $\mathcal{G}\supseteq \mathcal{F}$, a stable under countable intersections filter base that is total in $(\mathbb{R},\tau)$. We will prove that $\mathcal{G}$ is also total in $(\mathbb{R},\rho)$. Let $\mathcal{H}$ be a filter base that extends $\mathcal{G}$.

For each $H \in \mathcal{H}$, we define $V_{H} = \bigcup \{V \in \tau \mid |V \cap H|\leq \omega\}.$ We set $$\widehat{\mathcal{H}} = \{H\setminus V_{H} \mid H \in \mathcal{H}\} \cup \mathcal{H}\supseteq \mathcal{G}.$$ 

Let us verify that $\widehat{\mathcal{H}}$ is a filter base. Given $ H \in \mathcal{H}$ and $E \in [\mathbb{R}]^{\leq \omega}$, using the fact that $\cap \mathcal{F}=\emptyset$, we can fix, for each $z \in E$, a set $F_{z} \in \mathcal{F}$ such that $z \not\in F_{z}$. Since $\mathcal{F}$ is stable under countable intersections, there is $F' \in \mathcal{H}$ such that $ F' \subseteq H \cap \bigcap_{z \in E} F_{z} $, therefore $F'$ is disjoint from $E$.

Since the usual topology on the reals is hereditarily Lindelöf, it follows that $V_{H}$ is Lindelöf. Therefore, we can find $ \mathcal{A} \subseteq \{ V \in \tau \mid |V \cap H|\leq \omega\}$ countable such that $\bigcup \mathcal{A} = V_{H}$. Hence, $V_{H} \cap H$ is countable.

 It follows that for every $H \in \mathcal{H}$, setting $E = H \cap V_{H}$, there is $F' \in \mathcal{F}$ such that $F' \subseteq (H \setminus V_{H}) \subseteq H$. Thus, since $\mathcal{H}$ is a filter base, it follows that $\widehat{\mathcal{H}}$ is a filter base.

We will now prove that for every $H \in \mathcal{H}$, we have $\overline{H \setminus V_{H}}^{\rho}=\overline{H \setminus V_{H}}^{\tau}$. Therefore, from the fact that $\mathcal{G}$ is total in $(\mathbb{R},\tau)$, it will follow that $$\bigcap \{ \overline{T}^{\rho} \mid T \in \widehat{\mathcal{H}} \} = \bigcap \{ \overline{T}^{\tau} \mid T \in \widehat{\mathcal{H}} \} \neq \emptyset.$$

Since $\mathcal{H}$ is an arbitrary filter base extending $\mathcal{G}$, this will imply that $\mathcal{G}$ is total in $X=(\mathbb{R},\rho)$.

The inclusion $\subseteq$ follows from the fact that $\rho$ is a refinement of $\tau$. So we only have to verify the other inclusion. Let $x \in \overline{H \setminus V_{H}}^{\tau}$ and let $V$ be an open set of $(\mathbb{R},\tau)$ such that $x \in V$. If $ |V \cap H| \leq \omega $, then $V \subseteq V_{H}$, which contradicts the choice of $x \in \overline{H \setminus V_{H}}^{\tau}$. Thus, for every open neighborhood $H$ of $x$, we have $$|V \cap H| \geq \omega_{1}.$$ So, for every $E \in [\mathbb{R}]^{\leq \omega}$, we have $$ |V \cap (H \setminus E) | \geq \omega_{1} >0.$$ In particular, if $E = V_{H} \cap H$, we have that $|V \cap (H \setminus V_{H})| \geq \omega_{1}$. Hence, $x \in \overline{H\setminus V_{H}}^{\rho}$, which shows that $\overline{H \setminus V_{H}}^{\rho}=\overline{H \setminus V_{H}}^{\tau}$. 
\end{proof}

 In \cite[Theorem 4]{aurichitallmenger} it was proved that every Alster space is Menger.  Although, for regular spaces, the totally Lindelöf property implies the Aslter property \cite[Proposition 5.1]{noble}, we do not know whether every totally Lindelöf space is Menger.

 By \cite[Proposition 3.1]{ZdomskyMenger}, if there is a Michael space, then every productively Lindelöf space is Menger. In particular, if there is a Michael space, every totally Lindelöf space is Menger. Since it is consistent with $ZFC$ that there is a Michael space \cite{moore1999some}, it follows that $ZFC$ cannot prove that there is a totally Lindelöf Space that is not Menger. 
 
 \begin{question}
     Does the totally Lindelöf property imply the Menger property?
 \end{question}

 The topology $\rho$ constructed above, despite being non-regular, behaves relatively well. As we have shown in Theorem \ref{CETHM}, it is a Hausdorff topology. We will prove that it is  Menger in Proposition \ref{PropMenger} below. 

\begin{prop} \label{PropMenger}
The space $(\mathbb{R},\rho)$ defined above is Menger.
\end{prop}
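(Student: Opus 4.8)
Recall that a space is \emph{Menger} if for every sequence $\langle \mathcal{U}_n \mid n \in \omega\rangle$ of open covers there are finite subfamilies $\mathcal{V}_n \subseteq \mathcal{U}_n$ with $\bigcup_{n\in\omega}\mathcal{V}_n$ covering the space. The plan is to exploit the fact that the usual real line $(\mathbb{R},\tau_0)$ is $\sigma$-compact, hence Menger, and to keep track of the countably many points that the refinement $\rho$ can destroy. First I would reduce to covers by basic open sets: since the selection property for covers consisting of basic sets $U\setminus E$ (with $U\in\tau_0$ and $E\in[\mathbb{R}]^{\leq\omega}$) implies the full Menger property by the standard refinement argument, I may assume each $\mathcal{U}_n$ consists of such basic sets. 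The key observation is then that, once every member of a $\rho$-open cover $\mathcal{U}_n$ is written as $U\setminus E$, the family of usual-open parts $\{U \mid U\setminus E\in\mathcal{U}_n\}$ is a genuine $\tau_0$-open cover of $\mathbb{R}$, because each $x\in\mathbb{R}$ lies in some $U\setminus E$ and hence in $U$.

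Next I would split $\omega$ into two infinite sets $A=\{a_k\mid k\in\omega\}$ and $B=\{b_k\mid k\in\omega\}$ and write $\mathbb{R}=\bigcup_{k\in\omega}[-k,k]$ with each $[-k,k]$ compact in $\tau_0$. For each $k$ the usual-open parts of $\mathcal{U}_{a_k}$ cover the compact set $[-k,k]$, so I can extract a finite $\mathcal{V}_{a_k}\subseteq\mathcal{U}_{a_k}$ whose usual-open parts already cover $[-k,k]$. Let $D$ be the union of all the countable sets $E$ removed among the sets chosen into the various $\mathcal{V}_{a_k}$; as a countable union of countable sets, $D$ is countable. A point $x\in\mathbb{R}\setminus D$ lies in some $[-k,k]$, hence in the usual part $U$ of some $U\setminus E\in\mathcal{V}_{a_k}$, and since $x\notin D\supseteq E$ we get $x\in U\setminus E$; thus $\bigcup_{k\in\omega}\bigcup\mathcal{V}_{a_k}\supseteq\mathbb{R}\setminus D$.

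It remains to cover the countable leftover $D$, and this is where the spare indices in $B$ are used: enumerating $D=\{d_k\mid k\in\omega\}$, for each $k$ I pick a single member $W_k\in\mathcal{U}_{b_k}$ with $d_k\in W_k$ and set $\mathcal{V}_{b_k}=\{W_k\}$, a finite (singleton) subfamily, so that $D\subseteq\bigcup_{k\in\omega}W_k$. Then each $\mathcal{V}_n$ is finite and $\bigcup_{n\in\omega}\bigcup\mathcal{V}_n\supseteq(\mathbb{R}\setminus D)\cup D=\mathbb{R}$, witnessing the Menger property. The main obstacle — and the only place the special form of $\rho$ really enters — is the bookkeeping of the countable set $D$: one must notice that passing from a $\rho$-cover to its usual-open parts loses only the countably many points sitting in the finitely-many-per-stage removed sets, and that these can be recovered using a second, disjoint batch of the given covers. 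If $D$ happens to be finite the argument is only easier, covering its points with finitely many of the $\mathcal{V}_{b_k}$ and leaving the remaining selections empty.
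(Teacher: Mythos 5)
Your proof is correct and follows essentially the same route as the paper: pass to the usual-open parts $V(A)$ of the basic sets $U\setminus E$, use the Menger property of $(\mathbb{R},\tau_0)$ (which you derive directly from $\sigma$-compactness) to cover all but a countable set $D$ of leftover removed points, and then absorb $D$ one point per cover. The only cosmetic difference is that you reserve a disjoint infinite batch of indices $B$ for the mopping-up step, whereas the paper simply enlarges each already-chosen finite family $\mathcal{B}_m$ by one extra member containing the $m$-th leftover point; both are fine.
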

\begin{proof}
Let $\langle \mathcal{A}_{n} \mid n<\omega \rangle$ be a sequence of open covers in $\rho$. For each $n<\omega$ and each $A \in A_n$, we fix an open set $V(A)$ in the usual topology and a countable subset $E(A)$ such that $A=V(A)\setminus E(A)$.

Since, in the usual topology, $\mathbb{R}$ is a Menger space, we can fix a sequence of finite sets $\langle \mathcal{B}_{k(n)} \mid n<\omega \rangle$ such that for each $n \in \omega$ we have $\mathcal{B}_{k(n)}\subseteq A_{n}$  and $\{V(A) \mid n<\omega \textrm{ and }  A\in\mathcal{B}_{k(n)}\}$ is a covering of $\mathbb{R}$.

Thus, $$\mathbb{R}\setminus\bigcup \{ A \mid n < \omega \textrm{ and } A \in \mathcal{B}_{n} \} \subseteq\bigcup\{E(A)\mid n<\omega \textrm{ and }  A\in \mathcal{B}_{n}\}$$ is countable.

Let $\pi: \omega \rightarrow \bigcup\{E(A)\mid n<\omega \textrm{ and }  A\in \mathcal{B}_{k(n)}\}$ be a bijection. For each $m \in \omega$, since $\mathcal{A}_{m}$ is a covering of $X$, we can fix a set $V(A_{m})\setminus E(A_m)$ in $\mathcal{A}_{m}$  such that $\pi(m) \in V(A_{m}) \setminus E(A_{m})$. If we let $$\mathcal{C}_{m} = \{ V(A) \setminus E(A) \mid A \in \mathcal{B}_{m} \} \cup \{ V(A_{m}) \setminus E(A_{m}) \},$$ then $\mathcal{C}_{m}$ is finite, $\mathcal{C}_{m} \subseteq \mathcal{A}_{m}$ and $$\bigcup_{m \in \omega} \bigcup \mathcal{C}_{m} = X. $$ Thus $\{ \mathcal{C}_{m} \mid m \in \omega \}$ witnesses that $(\mathbb{R},\rho)$ is Menger. 

\end{proof}

Thus, by Theorem \ref{AlsterImpTotthm} and Theorem \ref{CETHM}, we conclude that the totally Lindelöf property lies between the Alster property and the productively Lindelöf property.

	\[\begin{tikzcd}
		\sigma\mbox{-compact} &&&& \mbox{Lindelöf + P-space} \\
		&& \mbox{Alster} \\
		\\
		&& \textbf{Totally Lindelöf} \\
		&& \mbox{Productively Lindelöf}
		\arrow[from=1-1, to=2-3]
		\arrow[from=1-5, to=2-3]
		\arrow[from=2-3, to=4-3]
		\arrow[from=4-3, to=5-3]
	\end{tikzcd}\]

In \cite[Theorem 2]{vaughan}, it is proved that every totally Lindelöf space is productively Lindelöf, then, applying Theorem \ref{CETHM}, we obtain the following Corollary: 
 \begin{cor} \label{PartialAlster}
     There is a non-regular space  $X$ that is productively Lindelöf and it is not an Alster space.
 \end{cor}

In \cite{Alster} and \cite[$\S$7 Question 1]{Raphael} it was asked whether there exists a productively Lindelöf space that is not an Alster space.  Corollary \ref{PartialAlster} provides a partial answer to that question, the following remains open:
\begin{question}
    Is there a regular space that is productively Lindelöf and it is not an Alster space? 
\end{question}
	\section{The discrete set property and Frolík spaces \label{questionc} } 

We will introduce the $\omega_{1}$-discrete set property and prove that no countable product of $\sigma$-compact spaces has this property. 

 Proposition \ref{Prop1} is part of the motivation of the definition of the $\omega_{1}$-discrete set property. We left the proof of Proposition \ref{Prop1} to the reader. 

\begin{prop} \label{Prop1}
Let $X$ be a $\sigma$-compact space. Then there is no sequence $\{x_{\alpha} \mid \alpha < \omega_{1}\} \subseteq X$ such that for all $\beta < \omega_1$, $\{x_{\alpha} \mid \alpha < \beta \}$ is a closed discrete set.
\end{prop}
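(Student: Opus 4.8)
The plan is to localize the hypothetical sequence inside a single compact piece of $X$ and then exploit the rigidity that a compact discrete space must be finite. Write $X=\bigcup_{n<\omega}K_{n}$ with each $K_{n}$ compact. Suppose, toward a contradiction, that there is a sequence $\langle x_{\alpha}\mid\alpha<\omega_{1}\rangle$ of pairwise distinct points of $X$ (this is the content of the $\omega_{1}$ discrete set property) such that $\{x_{\alpha}\mid\alpha<\beta\}$ is a discrete set for every $\beta<\omega_{1}$, and set $S=\{x_{\alpha}\mid\alpha<\omega_{1}\}$, an uncountable subset of $X$.

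First I would apply a pigeonhole argument to $S=\bigcup_{n<\omega}(S\cap K_{n})$. Since $S$ is uncountable and the union is countable, there is some $n<\omega$ with $S\cap K_{n}$ infinite. I then choose indices $\alpha_{0}<\alpha_{1}<\cdots<\omega_{1}$ with $x_{\alpha_{i}}\in K_{n}$ for all $i<\omega$; as the points are distinct, $\{x_{\alpha_{i}}\mid i<\omega\}$ is infinite. Put $\beta=\bigl(\sup_{i<\omega}\alpha_{i}\bigr)+1$, which is a countable ordinal, so $\beta<\omega_{1}$. Thus $D:=\{x_{\alpha}\mid\alpha<\beta\}$ is, by hypothesis, a discrete set, and it contains the infinite set $\{x_{\alpha_{i}}\mid i<\omega\}\subseteq K_{n}$.

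The crux is then to restrict $D$ to $K_{n}$. The set $D\cap K_{n}$ is discrete, being a subset of the discrete set $D$, and it is infinite since it contains every $x_{\alpha_{i}}$. Moreover $D$, being a discrete set, is closed in $X$, so $D\cap K_{n}$ is closed in the compact space $K_{n}$ and is therefore itself compact. But a compact discrete space is finite (cover it by its singletons and extract a finite subcover), contradicting that $D\cap K_{n}$ is infinite. This contradiction shows that no such sequence exists.

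I expect the main work to be the localization step, which is exactly where $\sigma$-compactness is used: it forces infinitely many of the $x_{\alpha}$ into one compact $K_{n}$ while keeping their indices bounded below $\omega_{1}$, so that the resulting set is genuinely an initial segment $\{x_{\alpha}\mid\alpha<\beta\}$ with $\beta<\omega_{1}$ and hence a discrete set to which the hypothesis applies. Once the picture is confined to a single compact piece, the rigidity \emph{compact plus discrete implies finite} does the rest; the only point to handle with care is that passing to $D\cap K_{n}$ preserves both discreteness and closedness, which is precisely what lets compactness of $K_{n}$ bite.
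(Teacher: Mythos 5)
Correct, and essentially the paper's own argument: both proofs pigeonhole the uncountably many points into a single compact piece $K_{n}$, pass to an initial segment $\{x_{\alpha}\mid\alpha<\beta\}$ with $\beta<\omega_{1}$ that meets $K_{n}$ in an infinite set, and then contradict compactness. Your closing step (a closed discrete subset of a compact space is finite) is just a more explicit rendering of the paper's direct appeal to compactness, and both versions rest on the same reading of ``discrete set'' as having no accumulation point in the ambient space, which is the meaning the paper uses throughout.
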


\begin{defn}\label{dsp}
If $X$ is a topological space and there is $\{x_{\alpha} \mid \alpha < \omega_{1}\} \subseteq X$ such that for all $\beta < \omega_1$, the set $\{x_{\alpha} \mid \alpha < \beta \}$ is closed and  discrete, we say that $X$ has the $\omega_1$-discrete set property. 
\end{defn}

\begin{thm}\label{Prop3}
Let $\{X_n\mid n<\omega\}$ be a sequence of $\sigma$-compact spaces.
Then the product space $\prod_{n<\omega}X_n$ does not have the $\omega_{1}$-discrete set property.
\end{thm}
\begin{proof}
Firstly, let us establish the following notation: for each $m<\omega$, $\pi_m:\prod_{n<\omega}X_n\to X_m$ denotes the usual projection.

Let $\{x_\alpha\mid\alpha<\omega_1 \}\subseteq\prod_{n<\omega}X_n$ be such that $x_\alpha=x_\beta$ if and only if $\alpha=\beta$.
For $\alpha<\omega_1$ and $n<\omega$, define $x_\alpha(n)=\pi_n(x_\alpha)$.
For each $n<\omega$, fix a sequence $\{K_n^i\mid i<\omega\}$ where $K_n^i$ is a compact subspace of $X_n$, $K_n^i\subseteq K_n^{i+1}$ and $X_n=\bigcup_{i<\omega}K_n^i$ for each $i<\omega$. 

We will verify that for some $\beta < \omega_{1}$, the set $\{x_{\alpha} \mid \alpha < \beta \}$ has an accumulation point. 

Given $\alpha < \omega_{1}$, we have $x_{\alpha} \in \prod_{n < \omega}X_{n}$ and there is some $i < \omega$ such that $x_{\alpha}(0) \in K^{i}_{0}$. Therefore, $x_{\alpha} \in \pi^{-1}_{0}[K^{i}_{0}]$. Thus,  

$$\{x_{\alpha} \mid \alpha < \beta \} \subseteq \bigcup_{i<\omega}\pi_0^{-1}[K_0^i]$$

and there is a least natural number $i_0$ such that 
\[|\{x_{\alpha} \mid \alpha < \omega_{1}\}\cap\pi_0^{-1}[K_0^{i_0}]|=\aleph_1.\]

Inductively, if there are $i_0,...,i_n<\omega$ such that
\[|\{x_{\alpha} \mid \alpha < \omega_{1}\}\cap\pi_0^{-1}[K_0^{i_0}]\cap...\cap\pi_n^{-1}[K_n^{i_n}]|=\aleph_1\]
it follows that there is a least natural $i_{n+1}$ such that

		\[(|(\{x_{\alpha} \mid \alpha < \omega_{1}\}\cap\pi_0^{-1}[K_0^{i_0}]\cap...\cap\pi_n^{-1}[K_n^{i_n}])\cap\pi_{n+1}^{-1}[K_{n+1}^{i_{n+1}}]|=\aleph_1.\]

For each $n<\omega$ define $F_n=\{x_{\alpha} \mid \alpha < \omega_{1}\}\cap\bigcap_{m\leq n}\pi_m^{-1}[K_m^{i_m}]$.
Since each $F_n$ is uncountable, fix a strictly increasing sequence $\{\alpha_n\mid n<\omega\}\subseteq\omega_1$ such that for each $n<\omega$, $x_{\alpha_n}\in F_n$.

Since $K_0^{i_0}$ is compact, if $\{x_{\alpha_{n}}(0) \mid n \in \omega \}$ is infinite, then we can fix an accumulation point $p_0\in K_0^{i_0}$  of $\{x_{\alpha_{n}}(0) \mid n \in \omega \}$. If $\{x_{\alpha_{n}}(0) \mid n \in \omega \}$ is not infinite, we can fix $h \in \omega $ such that $\{ n \in \omega \mid x_{\alpha_{n}}(0) = x_{\alpha_{h}}  \}$ is infinite. In this case, we let $p_{0} = x_{\alpha_{h}}(0)$. In both cases, for every neighborhood $V$ of $p_0$, the set $\{n<\omega \mid x_{\alpha_n}(0)\in V\}$ is infinite and we have that $$\{n < \omega \mid x_{\alpha_n}(0) \in V\} \subseteq \{ n < \omega \mid x_{\alpha_{n}} \in \pi^{-1}_{0}[V]\}.$$

\begin{claim} There exists a sequence $\langle p_{n} \mid n \in \omega \rangle \in \prod_{n \in \omega} X_{n}$ such that for every $n \in \omega $, if $V_{m}$ is a neighborhood of $p_m$ in $X_m$ for each $m \leq n$, then the set $\{k<\omega\mid x_{\alpha_k} \in\bigcap_{m\leq n}\pi_m^{-1}[V_m]\}$ is infinite.
\end{claim} 

\begin{proof} 
We will construct $\langle p_{n} \mid n < \omega \rangle$ by recursion. We have discussed the case $n=0$ above, so let us assume that $n+1<\omega$ and that we have constructed $\langle p_m\mid m\leq n\rangle\in \prod_{m\leq n}K_m^{i_m}$ such that if we fix a neighborhood $V_m$ of $p_m$ in $X_m$ for each $m \leq n$, then the set $\{k<\omega\mid x_{\alpha_k} \in\bigcap_{m\leq n}\pi_m^{-1}[V_m]\}$ is infinite.

Suppose the induction step does not hold, namely, for all $p\in K_n^{i_n}$ and $m< n$, there is a neighborhood $V_m(p)$ of $p_m$ and $V(p)$ of $p$ such that $$\{k<\omega\mid x_{\alpha_k}\in\bigcap_{m<n}\pi_m^{-1}[V_m(p)]\cap\pi_n^{-1}[V(p)]\}$$ is finite. 
Since $K_n^{i_n}$ is compact, let us fix $l \in \omega $ and $ \{q_0, \cdots, q_l \} \subseteq K_n^{i_n}$  such that $K_n^{i_n}\subseteq\bigcup_{j \leq l} V(q_j)$.
For each $m<n$, let $V_m=\bigcap\{V_m(p)\mid p\in I\}$. 

Note that $\prod_{k < \omega} X_k \subseteq \bigcup_{j \leq l} \pi^{-1}_{n+1}[V(q_{j})]$. Hence, for each $k < \omega $, if $x_{\alpha_{k}} \in \bigcap_{m < n+1}\pi_m^{-1}[V_m]$ there is some $j \leq l$ such that 
$$x_{\alpha_{k}} \in \bigcap_{m < n+1}\pi_m^{-1}[V_m] \cap \pi^{-1}_{n+1}[V(q_{j})] $$

Thus, \begin{equation} \label{eq1} \bigcup_{j \leq l } \{ k < \omega \mid x_{\alpha_{k}} \in \bigcap_{m < n+1}\pi_m^{-1}[V_m] \cap \pi^{-1}_{n+1}[V(q_{j})] \} \end{equation}  is a subset of
$$\bigcup_{j \leq l } \{ k < \omega \mid x_{\alpha_{k}} \in \bigcap_{m < n+1}\pi_m^{-1}[V_m] \cap \pi^{-1}_{n+1}[V(q_{j})] \}$$
which is a contradiction. Indeed,  for each $j \leq l$ we have that
$$\{ k < \omega \mid x_{\alpha_{k}} \in \bigcap_{m < n+1}\pi_m^{-1}[V_m] \cap \pi^{-1}_{n+1}[V(q_{j})] \}$$
is a subset of
$$\{ k < \omega \mid x_{\alpha_{k}} \in \bigcap_{m < n+1}\pi_m^{-1}[V_{m}(q_j)] \cap \pi^{-1}_{n+1}[V(q_{j})] \}$$
which is finite. Therefore, the union in (\ref{eq1}) is finite. However, we assumed that $\{k<\omega\mid x_{\alpha_k}\in\bigcap_{m<n}\pi_m^{-1}[V_m]\}$ is infinite. This contradiction proves the claim.
\end{proof}
		
		Now, let us fix $\langle p_{n} \mid n \in \omega \rangle \in\prod_{n<\omega}X_n$ as given by the claim above.    
		
		Let $\beta=\sup_{n<\omega}\alpha_n$. We have that $\{x_{\alpha} \mid \alpha < \beta \} $ is not a discrete set, therefore,  $X$ does not have the $\omega_{1}$-discrete set property.
	\end{proof}

    It is worth noting that, even if we restrict the statement of Theorem \ref{Prop3} to finite products, it is not possible to weaken the hypothesis that the spaces are $\sigma$-compact  to just Lindelöf.   To illustrate this, we shall use a space famous for being a counterexample to the preservation of topological properties by products: the Sorgenfrey Line, denoted here as $\mathbb{R}_s$. For a reference on $\mathbb{R}_{s}$ see \cite{Engelking77}.

\begin{prop}  Every discrete subset of $\mathbb{R}_s$ is at most countable and there is a closed discrete subset $A \subseteq \mathbb{R}^{2}_{s}$ such that $|A|=2^{\aleph_0}$. In particular,    $\mathbb{R}_s$ does not have the  $\omega_1$-discrete set property whereas $\mathbb{R}_s^2$ does. 
\end{prop}

\begin{proof} The Sorgenfrey Line  is hereditarily Lindelöf, therefore there is no uncountable discrete subspace of $\mathbb{R}_{s}$. On the other hand, $A=\{\po{x}{-x}: x \in \mathbb{R}_s\}$ is a closed discrete subset of $\mathbb{R}_{s}\times \mathbb{R}_{s}$ such that $|A|=2^{\aleph_{0}}$.
    
 


\end{proof}
	
	\begin{defn}  \label{Pspace} We say that a topological space $X$ is a $P$-space if every countable intersection of open sets in $X$ is an open set in $X$. 	
\end{defn}

\begin{cor}\label{corFrolik}
	There is a P-space that is not Frolík, namely $L(\omega_1)$.
\end{cor}
\begin{proof}
	If there were a homeomorphism to a closed subspace $Z$ of the product space
	\[\rho: L(\omega_1) \to Z \subseteq \prod_{n<\omega}X_n\]
	where each $X_n$ is $\sigma$-compact, then $\{\rho(\alpha)\mid\alpha<\omega_{1}\}$ would witness that $\prod_{n \in \omega} X_{n}$ has the $\omega_1$-discrete set property, contradicting Theorem \ref{Prop3}.
\end{proof}

\begin{prop}\label{AlsterMetrizable}
    $\omega^{\omega}$ is a Frólik space that is not a totally Lindelöf space.
\end{prop}
\begin{proof}
    It is immediate that $\omega^{\omega}$ is Frólik. Towards a contradiction suppose that $\omega^{\omega}$ is a totally Lindelöf space. Since $\omega^{\omega}$ is metrizable,  by \cite[Corollary 6.2]{vaughan}, it follows that  $\omega^{\omega}$ is $\sigma$-compact which is a contradiction.
\end{proof}

\begin{remark}
    In \cite{Alster}, it was proved that every metrizable Alster spaces is $\sigma$-compact. Thus,  in the proof of Proposition \ref{AlsterMetrizable}, we could have applied  \cite{Alster} instead of \cite{vaughan}.
\end{remark}

Our next corollary together with Proposition \ref{AlsterMetrizable} will  answer Problem\ref{problem1}(c):

\begin{cor} \label{corc} $L(\omega_1)$ is an Alster space that is not Frolík.
\end{cor}
\begin{proof}
 It follows, by Proposition \ref{Lw1}, that $L(\omega_{1})$ is totally Lindelöf. It is straightforward to verify that $L(\omega_{1})$ is regular. Hence, by Theorem \ref{AlstervsTotthm}, we conclude that $L(\omega_{1})$ is Alster. Thus, $L(\omega_{1})$ is totally Lindelöf, Alster and, by Corollary \ref{corFrolik}, $L(\omega_1)$ is not a Frolík space.
\end{proof}
\section{Acknowledgments} 

We are grateful to  Lúcia R. Junqueira and Norman Noble for their helpful suggestions and comments on earlier versions of this manuscript.  We are grateful to Rodrigo Carvalho and Artur Tomita for several discussions on totally Lindelöf spaces and  Lúcia R. Junqueira for discussions on non-regular topological spaces. We also would like to thank the referee for their thorough reading and suggestions, which contributed to the improvement of our manuscript.

Some of the results of this paper were presented by the first author at  the \textit{56th Spring Topology and Dynamical Systems Conference}, March 2023. He thanks the organizers for the opportunity to speak and the participants for their feedback.

	\providecommand{\abntreprintinfo}[1]{%
 \citeonline{#1}}
\setlength{\labelsep}{0pt}
	
	\Addresses

\end{document}